\theoremstyle{plain}
\newtheorem{theorem}{Theorem}[section]		
\newtheorem{lemma}[theorem]{Lemma}
\newtheorem{claim}[theorem]{Claim}
\theoremstyle{remark}
\let\originalleft\left
\let\originalright\right
\renewcommand{\left}{\mathopen{}\mathclose\bgroup\originalleft}
\renewcommand{\right}{\aftergroup\egroup\originalright}
\def\imod#1{\allowbreak\mkern10mu({\operator@font mod}\,\,#1)}
\begin{document}
\title{A note on long powers of paths in tournaments}

\author{Ant\'onio Gir\~ao}
\email{tzgirao@gmail.com}

\begin{abstract}
A square of a path on $k$ vertices is a directed path $x_1\ldots x_k$, where $x_i$ is directed to $x_{i+2}$, for every $i\in \{1,\ldots k-1\}$. 
Recently, Yuster showed that any tournament on $n$ vertices contains a square of a path of length at least $n^{0.295}$. In this short note, we improve this bound. More precisely, we show that for every $\varepsilon>0$, there exists $c_{\varepsilon}>0$ such that any tournament on $n$ vertices contains a square of a path on at least $c_{\varepsilon}n^{1-\varepsilon}$ vertices.  
\end{abstract}

\maketitle

\section{Introduction}
Throughout the last decades, the study of sufficient conditions for a graph or an oriented graph to contain certain subgraphs has seen numerous developments, and probably the most important problem concern the existence of a Hamiltonian cycle or a spanning tree (see e.g. \cite{kelly,KomSarSzeme1,JoosKimkuhnosthus,KuhnOsthusSurv}).

In this note, we are interested in finding a long square of path in a tournament.
A $k$th power of a (directed) cycle (or directed path) on $m$ vertices is a sequence of vertices $x_1x_2\ldots x_m$ where the edge $(x_i,x_j)$ is present (and $x_i \rightarrow x_j$, in case of directed path or cycle), for every $i<j \leq i+k$ (we take $i,j$ modulo $m$ in case of a cycle). 
Confirming a longstanding conjecture of Seymour, Koml\'os,  S\'ark\"ozy and, Szemer\'edi \cite{KomSarSzeme}, showed in $1998$, that for every $k$, a sufficiently large graph $G$ on $n$ vertices contains the $k$th-power of a cycle provided $\delta(G)\geq \frac{kn}{k+1}$ and this bound is easily seen to be tight. 

In oriented graphs, such extremal questions are usually harder to answer. Only a decade ago, Keevash, K\"uhn and, Osthus \cite{KeeKuhnOsthus} proved that any sufficiently large oriented graph on $n$ vertices with minimum semidegree greater than $\frac{3n-4}{8}$ is Hamiltonian which is tight. 
We turn now to the problem we address in this note. A tournament is a complete oriented graph. It is easy to show that a tournament on $n$ vertices with minimum semidegree $n/4$ is strongly connected hence it must contain a Hamiltonian cycle. In $1990$, Bollob\'as and H\"aggkvist\cite{bollobashagg} improved this by showing that the same asymptotic bound is enough to guarantee a power of a Hamilton cycle. More precisely, they showed that for $\varepsilon>0$ and $k\in \mathbb{N}$, a tournament with semidegree at least $(1/4+\varepsilon)n$ contains the $k$th power of a Hamilton cycle. 
 Given a tournament $T$, let $pp(T)$ be the longest square of a path in $T$. 
Motivated by the observation that any tournament contains a Hamiltonian path, Yuster \cite{Yuster} studied the following question. What is the smallest value of $pp(T)$ over all tournaments on $n$ vertices?
Very recently, he showed that $pp(T) \geq n^{0.295}$, for every tournament $T$ on $n$ vertices. In here, we substantially improve this bound. 

\begin{theorem}
For every $\varepsilon>0$, there exists $c_{\varepsilon}$ such that the following holds. Let $T$ be a tournament on $n$ vertices then it contains a square of a path of order at least $c_{\varepsilon}n^{1-\varepsilon}$. 
\end{theorem}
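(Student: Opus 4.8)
The plan is to prove the theorem by induction on $n$, but to carry through a strengthened statement that builds in ``flexible endpoints''. The engine is an elementary gluing fact: if $P$ is a square path with last two vertices $a,b$ and $Q$ is a vertex-disjoint square path whose first two vertices $c,d$ satisfy $a\to c$, $b\to c$ and $b\to d$, then $PQ$ is again a square path, since the only newly required edges are those three together with $a\to b$ and $c\to d$. In particular, if $c,d$ both lie in the common out-neighbourhood $N^+(a)\cap N^+(b)$, the concatenation is automatic. So the strategy is to produce, in each of many disjoint sub-tournaments, a long square path whose starting and ending pairs may be re-chosen inside a large ``flexibility set,'' and then to string many such pieces together.

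Concretely, given $T$ on $n$ vertices, fix a large constant $k=k(\varepsilon)$ and partition $V(T)$ into parts $V_1,\dots,V_k$ of size $n/k$ (say, consecutive blocks of a median order, so that each $T[V_i]$ inherits structure, though a balanced partition should suffice with more work). In each $V_i$, apply the inductive hypothesis to obtain a square path $P_i$ of order $\gtrsim (n/k)^{1-\varepsilon}$ together with a set $F_i\subseteq V_i$, $|F_i|\ge |V_i|/2$, such that $P_i$ may be taken to start, and to end, with an arbitrary directed edge of $T[F_i]$ --- this last clause being the \emph{flexible endpoints} strengthening. Now build the auxiliary digraph $D$ on $\{1,\dots,k\}$ with $i\to j$ whenever there are edges $(a,b)$ of $T[F_i]$ and $(c,d)$ of $T[F_j]$ with $a\to c$, $b\to c$, $b\to d$; the crucial point is that $D$ is (essentially) a tournament, because the bipartite tournament between the large sets $F_i$ and $F_j$ is unbalanced in at least one direction, and that direction yields a chain --- in the extreme case where all edges run from $F_i$ to $F_j$, \emph{any} edge of $T[F_i]$ followed by \emph{any} edge of $T[F_j]$ works. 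Since a tournament on $k$ vertices has a Hamiltonian path (and a digraph missing few arcs still has a directed path on $\Omega(k)$ vertices), concatenating the corresponding pieces yields a square path of order $\gtrsim k\cdot(n/k)^{1-\varepsilon}=k^{\varepsilon}\,n^{1-\varepsilon}$, which closes the induction once $k$ is large enough to absorb the constant losses hidden in the $\gtrsim$'s. The base case and the extraction of each $F_i$ rest on an averaging observation: an $m$-vertex tournament has at least $m^{3}/8-O(m^{2})$ transitive triples, so some edge has common out-neighbourhood of size $\ge m/4-O(1)$, giving a sub-tournament of that size in which to recurse.

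The step I expect to be the main obstacle is making the flexible-endpoints strengthening genuinely self-sustaining, together with the chainability analysis for $D$. The tension is that ``$F_i$ contains a long square path with flexible ends'' is a far stronger demand than ``$V_i$ contains a long square path,'' and one must verify that requiring this extra output degrades the length by at most a bounded factor \emph{per level of recursion}, independent of $k$ --- otherwise the accumulated losses swallow the gain $k^{\varepsilon}$. Equally delicate is that ``most edges of the bipartite tournament between $F_i$ and $F_j$ run one way'' does not by itself give a specific pair $(a,b)\subseteq F_i$ jointly dominating a pair $(c,d)\subseteq F_j$: the inclusion--exclusion between two out-neighbourhoods is vacuous, so establishing chainability away from the extreme case needs a finer internal argument (passing to sub-parts, or iterating the partition). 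It is precisely the cost of resolving these two points that pushes the final exponent down from the ``perfect'' value $1$ to $1-\varepsilon$.
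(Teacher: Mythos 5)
Your high-level architecture is genuinely close to the paper's: both proofs recurse on parts of a partition and concatenate the resulting square paths along a directed path in an auxiliary digraph on the parts, gaining a factor $k^{\varepsilon}$ that closes the induction. But the two steps you yourself flag as ``the main obstacle'' are exactly where the content of the proof lives, and your proposal has no mechanism to resolve them. First, the \emph{flexible endpoints} strengthening is never formulated precisely, let alone proved, and it is doing all the work in your gluing step. The paper sidesteps this entirely by reversing the quantifiers: instead of demanding that the current piece can end at an arbitrary edge of a large set, it takes whatever last two vertices $x,y$ the recursion hands back and then recurses \emph{inside} $N^{+}(x)\cap N^{+}(y)\cap(\text{next part})$. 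For that to work one needs this common out-neighbourhood to be a constant fraction of the next part for (almost) every pair $x,y$, and that is precisely what the paper buys with Szemer\'edi's regularity lemma plus a density dichotomy: after disposing of intermediate-density regular pairs (where Lemma~\ref{lem:goodpair} produces a near-linear square path directly, by chaining ``good'' pairs $e_1,e_2,\dots$ with $e_{i+1}\subseteq N^{+}(e_i)$), every surviving regular pair has density $\geq 1-\delta$ or $\leq\delta$, so a $(1-\delta)$-dense pair really does have almost every vertex dominating almost everything on the other side. Your balanced or median-order partition gives you nothing of the sort.

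Second, your claim that the auxiliary digraph $D$ is ``essentially a tournament'' is unsupported, and your own discussion of the inclusion--exclusion failure shows why: a majority direction between $F_i$ and $F_j$ does not produce a pair in $F_i$ jointly dominating a pair in $F_j$ (consider $b$ of in-degree $0$ in $F_i$, or the chosen $c$ being a sink of $T[N^{+}(b)\cap F_j]$). The paper does not need $D$ to be tournament-like at all; it splits into cases. If $D$ contains a directed path of length $\delta\ell/2$ it concatenates along it (your Hamiltonian-path idea, but only needing a path through a constant fraction of the parts). If not, Lemma~\ref{lem:ord} yields an ordering of the parts with small back-in-degree, hence a bipartition $L\cup R$ in which almost every vertex of $L$ dominates almost all of $R$; one then glues just \emph{two} recursive pieces of size roughly $n/2$ each, and the induction closes because $2\cdot(1/2)^{1-\varepsilon}>1$. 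This third case has no counterpart in your proposal, and without it (or without a proof that $D$ always contains a long path) the argument does not go through. In short: correct skeleton, but the regularity lemma, the intermediate-density case, and the no-long-path case are missing, and the flexible-endpoints inductive hypothesis you substitute for them is itself an unproved (and probably harder) statement.
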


\subsection{Notation}
Our notation is standard. Let $T$ be a tournament and $S\subset T$, then we denote $N^{+}(S)=\cap_{x\in T} N^{+}(x)$. Let $(A,B)$ be a balanced bipartite tournament with $|A|=|B|=n$. We denote by $\overset{\rightarrow}{d}[A,B]$ the density of the graph formed by the edges going from $A$ to $B$.

As usual, we say $(A,B)$ form an $(\varepsilon,d)$-regular pair if $\overset{\rightarrow}{d}[A,B]=d$ and for every subset $A'\subset A,B'\subset B$ where $|A'|,|B'|\geq \varepsilon n$, $\overset{\rightarrow}{d}[A',B']=(d \pm \varepsilon)$. 

Let $0<\varepsilon<1$. We say $T$ has a $\varepsilon$-regular partition on $M$ parts if the following hold. 
\begin{enumerate}[i)]
    \item $V(T)=V_0\cup V_1\ldots \cup V_{M}$,
    \item $|V_0|\leq \varepsilon n$, and we shall call it the exceptional set,
    \item $|V_i|=m$, for every $i\in [M]$,
    \item all but at most $\varepsilon M^2$ pairs of parts form $\varepsilon$-regular pairs. 
\end{enumerate}
 
Given an $(\varepsilon,d)$-regular pair of a bipartite tournament $(A,B)$, we say a pair $(x,y)\in A^{(2)}$ is \textit{good} if $$|N^{+}(x)\cap N^{+}(y)|\geq (d^2-10\varepsilon)n.$$

Finally, we write $0<c_1\ll c_2\ll\ldots\ll c_r$, to mean that we can choose the constants $c_1,c_2,\ldots c_k$ from right to left. More precisely, there are non-decreasing functions $f_1, f_2,\ldots, f_{k-1}$ such that, given $c_k$, whenever we choose some $c_i \leq f_i(c_i+1)$, all calculations needed using these constants are valid.

\subsection{Preliminaries}
Here we collect some results we need for our proof. 

\begin{lemma}\label{lem:ord}
Let $G$ be an oriented graph on $n$ vertices without a path of length $k$, then there is an ordering of the vertex set $x_1,\ldots x_n$ such that for every $i\in \{1,\ldots n\}$, $|N^{-}(x_i)\cap \{x_{i+1},\ldots x_n\}|\leq k-1$. 
\end{lemma}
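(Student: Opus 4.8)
The plan is to reduce the statement to a ``directed degeneracy'' fact and then build the ordering by peeling off vertices one at a time from the left.

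The key observation I would establish first is the following: every oriented graph $H$ that contains no directed path on $k$ vertices has a vertex $v$ with $|N^{-}_{H}(v)|\le k-1$. To prove this, let $P=v_1v_2\ldots v_t$ be a longest directed path in $H$, so that $t\le k-1$ by hypothesis. Every in-neighbour of $v_1$ must lie on $P$: if $u\to v_1$ with $u\notin V(P)$, then $u\to v_1\to v_2\to\cdots\to v_t$ is a strictly longer directed path, contradicting the maximality of $P$. Hence $N^{-}_H(v_1)\subseteq V(P)\setminus\{v_1\}$, so $|N^{-}_H(v_1)|\le t-1\le k-2\le k-1$ and $v=v_1$ works.

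Given this, I would construct the ordering greedily. Set $H_1:=G$; having chosen $x_1,\ldots,x_{i-1}$, let $H_i:=G-\{x_1,\ldots,x_{i-1}\}$. Since $H_i$ is a subgraph of $G$ it also contains no directed path on $k$ vertices, so by the observation above we may pick $x_i\in V(H_i)$ with $|N^{-}_{H_i}(x_i)|\le k-1$, set $H_{i+1}:=H_i-x_i$, and continue. This produces an ordering $x_1,\ldots,x_n$ of $V(G)$. For each $i$, the set $\{x_{i+1},\ldots,x_n\}$ is exactly $V(H_i)\setminus\{x_i\}$, and an edge of $G$ from $\{x_{i+1},\ldots,x_n\}$ into $x_i$ is precisely an edge of $H_i$ into $x_i$; therefore $|N^{-}(x_i)\cap\{x_{i+1},\ldots,x_n\}|=|N^{-}_{H_i}(x_i)|\le k-1$, which is what is required.

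The argument is short and I do not anticipate a genuine obstacle: the only point to watch is that the hypothesis ``no directed path on $k$ vertices'' is inherited by every subgraph, which is immediate, so the greedy step never gets stuck. (If ``path of length $k$'' is instead read as having $k$ edges, the identical argument gives the bound $k$ rather than $k-1$; the statement as written matches the convention of counting vertices.)
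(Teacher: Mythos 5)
Your proof is correct and follows essentially the same route as the paper: both arguments repeatedly extract a vertex of in-degree at most $k-1$ and recurse on the remainder, the only cosmetic difference being that the paper certifies the existence of such a vertex by noting that minimum in-degree $\geq k$ would force a path of length $k$, while you take the initial vertex of a longest path and observe that all its in-neighbours lie on that path. (Your closing parenthetical has the conventions slightly crossed --- forbidding a path with $k$ edges bounds the longest path by $k$ vertices and hence gives $|N^{-}(v_1)|\leq k-1$, exactly the stated bound --- but this does not affect the validity of the argument.)
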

\begin{proof}
This easily proved by induction. Indeed, if all vertices have in-degree at least $k$ we clearly can find a directed path of length $k$, a contradiction. Therefore, there is a vertex $x\in V(G)$ with $d^{-}(x)\leq k-1$. Let $x\coloneqq x_1$ and apply induction to $G-x$.
\end{proof}

\begin{lemma}[Szemer\'edi regularity Lemma \cite{Szmeredireg}]\label{lem:regularity}
For every $\varepsilon>0$ and positive integer $m$, there exists $M=M(\varepsilon,m)$ such that any tournament has a $\varepsilon$-regular partition on $\ell$ parts where $m\leq \ell\leq M$. 
\end{lemma}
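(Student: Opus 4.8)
The plan is to prove the statement by the classical energy-increment (defect Cauchy--Schwarz) argument of Szemer\'edi, adapted so that the relevant quantity is the directed arc-density $\overset{\rightarrow}{d}[\cdot,\cdot]$ rather than an undirected edge-density. For a partition $\mathcal{P}$ of $V(T)$ into an exceptional set $V_0$ together with parts $V_1,\dots,V_k$, define its \emph{index} (mean-square density) by
\[
q(\mathcal{P})=\sum_{i,j}\frac{|V_i|\,|V_j|}{n^2}\,\overset{\rightarrow}{d}[V_i,V_j]^2,
\]
where the sum ranges over all \emph{ordered} pairs $i\neq j$ and $V_0$ is treated as split into singletons, so that refining the exceptional set only increases the index. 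Since each arc-density lies in $[0,1]$ and $\sum_i |V_i|/n\le 1$, one has $0\le q(\mathcal{P})\le 1$ for every partition; this boundedness is what drives termination.

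The engine of the proof is three monotonicity facts, each verified exactly as in the undirected case, because for a fixed ordered pair of parts the arcs going from $V_i$ to $V_j$ form an ordinary bipartite graph whose edge-density is $\overset{\rightarrow}{d}[V_i,V_j]$. First, refining a partition never decreases the index: this is Cauchy--Schwarz (equivalently, convexity of $x\mapsto x^2$) applied within each ordered pair. Second, if $(V_i,V_j)$ fails to be $\varepsilon$-regular, then by definition there are witnesses $A'\subseteq V_i$, $B'\subseteq V_j$ with $|A'|,|B'|\ge \varepsilon|V_i|$ and $|\overset{\rightarrow}{d}[A',B']-\overset{\rightarrow}{d}[V_i,V_j]|>\varepsilon$; splitting $V_i$ along $\{A',V_i\setminus A'\}$ and $V_j$ along $\{B',V_j\setminus B'\}$ raises the contribution of this single ordered pair by at least $\varepsilon^{4}\,|V_i||V_j|/n^2$, by the defect form of Cauchy--Schwarz. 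Third, combining these, if $\mathcal{P}$ has $k$ equal non-exceptional parts and is \emph{not} $\varepsilon$-regular---so that more than $\varepsilon k^2$ ordered pairs are irregular---then simultaneously refining every part using the witness sets of all the pairs it belongs to yields a common refinement $\mathcal{Q}$ with at most $k\,2^{k}$ parts and $q(\mathcal{Q})\ge q(\mathcal{P})+\varepsilon^{5}$.

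With these in hand I would iterate. Starting from an arbitrary partition into $m$ equal parts (plus a small $V_0$), as long as the current partition is not $\varepsilon$-regular I apply the third fact to refine it. Each refinement increases $q$ by a fixed amount $\varepsilon^{5}$, while $q$ never exceeds $1$, so the process stops after at most $\varepsilon^{-5}$ steps, necessarily at an $\varepsilon$-regular partition. Since each step multiplies the number of parts by at most $2^{k}$, the final number of parts $\ell$---and hence the promised bound $M=M(\varepsilon,m)$---is an iterated exponential (a tower) of height $O(\varepsilon^{-5})$; in particular $m\le \ell\le M$.

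The only genuinely fiddly point, and where I expect the main bookkeeping effort to lie, is maintaining equal part sizes and controlling the exceptional set across the iteration. After each simultaneous refinement the pieces have unequal sizes, so I would recut every piece into blocks of a common size and sweep the leftover $o(1)$-fraction of vertices into $V_0$; one must then check that, over the bounded number $O(\varepsilon^{-5})$ of iterations, these leftovers accumulate to at most $\varepsilon n$, which holds provided the initial common block size (equivalently $M$) is chosen large enough. This is purely a counting matter and does not interact with directedness: indeed, the whole argument could alternatively be deduced from the undirected regularity lemma applied to the bipartite ``forward-arc'' graph between two copies of $V(T)$ followed by a common refinement, but the direct index-increment argument above is cleaner and handles the ordered densities with no extra overhead.
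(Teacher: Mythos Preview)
The paper does not prove this lemma at all: it is stated with a citation to Szemer\'edi's original paper and used as a black box. Your proposal is the standard energy-increment (index) argument, correctly adapted to ordered pairs via the directed density $\overset{\rightarrow}{d}[\cdot,\cdot]$, and it is a sound sketch of how one would actually prove the statement; in particular your observation that for a fixed ordered pair the forward arcs form an ordinary bipartite graph is exactly why no new ideas beyond the undirected case are needed. So there is nothing to compare against---you have supplied a (correct) proof where the paper simply invokes the result.
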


\begin{lemma}\label{lem:goodpair}
Let $0<1/n\ll \varepsilon \ll \delta\leq 1/2$. Let $(A,B)$ be a $(\varepsilon,d)$-regular pair where $\delta \leq d$ and $|A|=|B|=n$. Then, for every set $F\subseteq A$ (or $F\subseteq B$), where $|F|\geq \delta^2/4 n$, there is at least a \textit{good} pair $(x,y)$ with $x,y\in F$. 
\end{lemma}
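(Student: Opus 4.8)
\emph{Plan.} The approach is a convexity (Cauchy--Schwarz) count on the codegrees into $B$. Throughout, assume $F\subseteq A$; the case $F\subseteq B$ is entirely symmetric, reading in-neighbourhoods in $A$ in place of out-neighbourhoods in $B$. First I would record that, since $\varepsilon\ll\delta$, we have $|F|\geq \delta^2 n/4\geq \varepsilon n$, so $F$ is a legitimate witness set for $\varepsilon$-regularity of the pair $(A,B)$. Applying $\varepsilon$-regularity to the subsets $F\subseteq A$ and $B\subseteq B$ (both of size $\geq\varepsilon n$) gives $e(F\to B)\geq (d-\varepsilon)\,|F|\,n$, where $e(F\to B)$ denotes the number of edges directed from $F$ to $B$.

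Next I would double count the sum $\sum_{(x,y)\in F\times F}|N^+(x)\cap N^+(y)\cap B|$ by interchanging the order of summation: it equals $\sum_{b\in B}|N^-(b)\cap F|^2$. By convexity this is at least $\tfrac1{|B|}\bigl(\sum_{b\in B}|N^-(b)\cap F|\bigr)^2 = e(F\to B)^2/n \geq (d-\varepsilon)^2|F|^2 n$. I then discard the diagonal terms $x=y$, which together contribute $\sum_{x\in F}|N^+(x)\cap B| = e(F\to B)\leq |F|n$, leaving $\sum_{x\neq y}|N^+(x)\cap N^+(y)\cap B|\geq (d-\varepsilon)^2|F|^2 n - |F|n$. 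Dividing by $|F|^2$, a largest term among the ordered pairs with $x\neq y$ satisfies $|N^+(x)\cap N^+(y)\cap B|\geq (d-\varepsilon)^2 n - n/|F| \geq (d-\varepsilon)^2 n - 4/\delta^2$, using $|F|\geq\delta^2 n/4$.

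Finally I would absorb the error terms. Since $0\leq d\leq 1$, we have $(d-\varepsilon)^2 n \geq (d^2-2\varepsilon d)n \geq (d^2-2\varepsilon)n$, and since $1/n\ll\varepsilon\ll\delta$ forces $n$ to exceed any fixed function of $\varepsilon^{-1}$ and $\delta^{-1}$ we get $4/\delta^2\leq 8\varepsilon n$. Hence $|N^+(x)\cap N^+(y)| \geq |N^+(x)\cap N^+(y)\cap B| \geq (d^2-2\varepsilon)n - 8\varepsilon n = (d^2-10\varepsilon)n$, so $(x,y)$ is a good pair with $x,y\in F$, as desired.

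I do not anticipate a genuine obstacle. The only two points needing care are: (i) checking $|F|$ is large enough to invoke regularity of $(F,B)$ --- this is precisely where $\varepsilon\ll\delta$ (via $|F|\geq\delta^2 n/4$) is used; and (ii) verifying that the diagonal correction together with the $n/|F|$ and the linear-in-$\varepsilon$ losses are all swallowed by the slack between $d^2-2\varepsilon$ and $d^2-10\varepsilon$, which is where $1/n\ll\varepsilon$ enters. Everything else is routine bookkeeping.
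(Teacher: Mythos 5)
Your proof is correct, but it takes a genuinely different route from the paper's. The paper argues globally in $A$: it first discards the at most $2\varepsilon n$ vertices of atypical out-degree, then, for each remaining vertex $y$, applies regularity a second time to the pair $(A, N^{+}(y))$ to show that all but $O(\varepsilon n)$ vertices $x$ satisfy $|N^{+}(x)\cap N^{+}(y)|=(d^{2}\pm 10\varepsilon)n$; this bounds the number of \emph{bad} pairs in $A$ by $O(\varepsilon n^{2})$, which is smaller than $\binom{|F|}{2}$ once $|F|\geq \delta^{2}n/4$ and $\varepsilon\ll\delta$. You instead localise to $F$ from the outset and run a second-moment computation: a single application of regularity to the pair $(F,B)$ gives $e(F\to B)\geq (d-\varepsilon)|F|n$, and Cauchy--Schwarz on the codegrees $|N^{-}(b)\cap F|$ forces some off-diagonal pair to have codegree at least $(d-\varepsilon)^{2}n-n/|F|\geq (d^{2}-10\varepsilon)n$. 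Your route is shorter, invokes regularity only once, and yields cleaner constants; the paper's route gives the stronger (though unused) conclusion that almost every pair in $A$ is good. Two small points to mind: bounding the maximum off-diagonal term by the sum divided by $|F|^{2}$ rather than by $|F|^{2}-|F|$ is legitimate only because the off-diagonal sum is nonnegative, which holds here since $(d-\varepsilon)^{2}|F|\geq 1$; and the parenthetical case $F\subseteq B$ requires reading \emph{good} with respect to the $B\to A$ density $1-d$ rather than $d$ --- but the paper's own statement and proof are no more careful on that point than you are.
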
 
\begin{proof}
It suffices to who there are at most $10\varepsilon\binom{n}{2}$ pairs of vertices which are not \textit{good} within $A$.
Let $A'\coloneqq\{x\in A \mid d^{+}_{B}(x)\notin \left ](d-2\varepsilon)n,(d+2\varepsilon)n \right [ \}$, by assumption $|A'|\leq 2\varepsilon n$. Let $A^*=A\setminus A'$. Fix a vertex $y\in A^* $, and let $N\coloneqq N^{+}(x)$. By construction, $|N|=(d\pm 2\varepsilon)n\geq \varepsilon n$. Let $ A_1^*\coloneqq \{x\in A^* \mid d^{+}_{N}(x)\notin ](d^2-10\varepsilon)n, (d^2+10\varepsilon)n [\}$. Suppose for contradiction $A_1^{*}\geq 2\varepsilon n$, then we may pass to a subset $A_2^*$ where $|A_2^{*}|\geq \varepsilon n$ with the property that all vertices in $A_2^*$ have at least $(d^2+10\varepsilon)n $ out-neighbours in $N$ (or at most $(d^2-10\varepsilon)n$, the argument is the same in this case).
As every vertex of $A_2^*$ sends at least  $(d^2+10\varepsilon)n$ out-edges to $N$, we have $\overset{\rightarrow}{d}[A_2^*,N]\geq \frac{|A_2^*|(d^2+10\varepsilon)n}{|A_2^*||N|}> (d+\varepsilon)$, which is a contradiction as $|N|,|A_2^*|\geq \varepsilon n$. This implies there are at most $|A^*|2\varepsilon n$ \textit{bad} edges within $A^*$. 
Finally, note that a \textit{bad} pair must either be incident with $A'$ in which case there are at most $|A'|n\leq \varepsilon n^2$ such edges. Or it must lie within $A^*$ for which there are at most $2\varepsilon n^2$ such edges.
\end{proof}

We are now ready to begin the proof of the main theorem. 

\section{Main proof}

\begin{proof}
Let $c_{\varepsilon}\ll 1/m\ll \varepsilon' \ll \delta\ll \varepsilon $. The proof will be by induction on $n$, if $n<c_{\varepsilon}^{-1-\varepsilon}$, there is nothing to show. Let $T$ be a tournament on $n\geq n_0$ vertices.
 First, we shall apply Sz\'em\'er\'edi regularity lemma with parameters $\varepsilon', m$. By Lemma~\ref{lem:regularity}, we can split $T$ into $\ell$ parts, where $m \leq \ell=M(\varepsilon',m)$, forming an $\varepsilon'$-regular partition. Let $V(T)=V_0\cup V_1\ldots \cup V_{\ell}$ be the $\varepsilon$-regular partition. 
  For technical reasons, we add any part $V_i$ (for $i\geq 1$) which is incident with than $2\varepsilon'^{1/2}\ell$ non-regular pairs to $V_0$. Clearly, there are at most $\varepsilon'^{1/2}n$ such parts so we may find a subset of the parts of size at least $\ell'\coloneqq (1-\varepsilon'^{1/2})\ell$ where every part is incident with at most $3\varepsilon^{1/2}\ell'$ non-regular edges. Let $A_0,A_1,\ldots,A_{\ell'}$ be the new partition, where $A_0$ is the new exceptional set. Note that $|A_0|\leq 3\varepsilon'^{1/2}n$.

 \begin{claim}
Suppose there is an $(\varepsilon',d)$-regular pair $(A,B)$, with $\delta \leq d\leq 1-\delta$, then we may find a square of path of length $\frac{\delta^2n}{\ell }\geq c_{\varepsilon}n^{1-\varepsilon}$.
 \end{claim}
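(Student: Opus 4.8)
The inequality $\frac{\delta^2 n}{\ell}\ge c_\varepsilon n^{1-\varepsilon}$ is not the point here: since $\ell$ is bounded by $M(\varepsilon',m)$, which depends only on $\varepsilon'$ and $m$ (Lemma~\ref{lem:regularity}), and since $|A|=|B|$ is a fixed fraction of $n$, this inequality just follows from the hierarchy $c_\varepsilon\ll 1/m\ll\varepsilon'\ll\delta$ and the assumption that $n$ is large. The substance is to construct, inside the regular pair $(A,B)$, a square of a path on a positive (depending on $\delta,\varepsilon',m$) fraction of $n$ vertices. The plan is to build it greedily, adding vertices two at a time and maintaining the invariant that the last two vertices of the partial path form a \emph{good} pair lying inside one of the two parts. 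This invariant is exactly what is needed in order to continue, because to extend a square of a path $x_1\ldots x_j$ one must pick the next vertex from $N^+(x_{j-1})\cap N^+(x_j)$, and Lemma~\ref{lem:goodpair} gives a useful lower bound on the size of this set precisely when $\{x_{j-1},x_j\}$ is a good pair of a regular pair. Since a good pair inside $A$ has its common out-neighbourhood inside $B$ (with density governed by $d$) while a good pair inside $B$ has its common out-neighbourhood inside $A$ (density $1-d$), this forces the parts to be visited in alternating blocks of size two: $A,A,B,B,A,A,B,B,\dots$

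In detail: suppose that after the $i$-th block $\{x_{2i-1},x_{2i}\}$ — a good pair inside a part $P\in\{A,B\}$ — I have recorded the \emph{target set} $F_i\coloneqq N^+(x_{2i-1})\cap N^+(x_{2i})\cap\bar P$, where $\bar P$ denotes the other part. Goodness of the pair together with $\delta\le d\le 1-\delta$ gives $|F_i|\ge(\delta^2-10\varepsilon')|A|\ge\tfrac12\delta^2|A|$; the bound $d\le 1-\delta$ is used here because a good pair inside $B$ has its common out-neighbourhood controlled by the $B$-to-$A$ density $1-d$, which must be at least $\delta$. To produce the next block I delete from $F_i$ the at most $2i$ already-used vertices and apply Lemma~\ref{lem:goodpair} to what remains (still of size $\ge\tfrac14\delta^2|A|$ as long as $i\le\tfrac18\delta^2|A|$), obtaining a good pair $\{a,b\}$ of unused vertices inside $F_i$. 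Relabelling so that the tournament edge between $a$ and $b$ is oriented $a\to b$, I put $x_{2i+1}\coloneqq a$ and $x_{2i+2}\coloneqq b$. The square-of-a-path adjacencies created by this step are exactly $x_{2i-1}\to x_{2i+1}$, $x_{2i}\to x_{2i+1}$, $x_{2i}\to x_{2i+2}$ and $x_{2i+1}\to x_{2i+2}$; the first three hold because $a,b\in F_i\subseteq N^+(x_{2i-1})\cap N^+(x_{2i})$, and the last is the orientation we chose. I then set $F_{i+1}\coloneqq N^+(a)\cap N^+(b)\cap P$ and continue. The process starts from any good pair $\{x_1,x_2\}\subseteq A$ (which exists by Lemma~\ref{lem:goodpair} with $F=A$) and runs for $i$ up to about $\tfrac18\delta^2|A|$, terminating with a square of a path on at least, say, $\tfrac14\delta^2|A|$ vertices, which, since $|A|\ge(1-\varepsilon')n/\ell$ and $\ell\le M(\varepsilon',m)$, is at least $c_\varepsilon n^{1-\varepsilon}$.

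The one place that genuinely needs care is the combinatorial skeleton: noticing that a good pair \emph{together with} the freedom to orient its single internal edge is exactly the local move that lets a square of a path reverse between $A$ and $B$, and that consequently the path must be grown in blocks of two, alternating between the parts. Everything after that is routine bookkeeping — checking the four square-of-a-path edges created at each block boundary, and verifying that $|F_i|$ stays above the threshold of Lemma~\ref{lem:goodpair} as used vertices accumulate, which uses $\varepsilon'\ll\delta$ (so the $10\varepsilon'$ error is negligible next to $\delta^2$) and the symmetry between blocks in $A$ and blocks in $B$ afforded by $\delta\le d\le 1-\delta$. The precise constant in the stated bound $\frac{\delta^2 n}{\ell}$ is immaterial, since only $\ge c_\varepsilon n^{1-\varepsilon}$ is used in the rest of the argument.
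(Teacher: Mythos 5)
Your proposal is correct and follows essentially the same route as the paper: both grow the square of a path greedily in blocks of two, maintaining that the last two vertices form a \emph{good} pair in one part, using Lemma~\ref{lem:goodpair} on the common out-neighbourhood (minus used vertices) to find the next good pair in the other part, and orienting the internal edge of each pair to complete the square-of-a-path adjacencies. Your write-up is in fact somewhat more careful than the paper's, in particular in spelling out why $d\leq 1-\delta$ is needed for the blocks lying in $B$ and in verifying the four new adjacencies at each step.
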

 \begin{proof}[Proof of the claim]
Let $(A,B)$ be a $(\varepsilon',d)$-regular pair where $\delta \leq d\leq 1-\delta$. We may iteratively construct a long square of path. The idea is to find a long sequence of edges $e_1,e_2,\ldots e_t$, where $e_{2i}\in A^{(2)}$ and $e_{2i+1}\in B^{(2)}$, for all $i\in [k/2]$ which satisfy the following three properties.
\begin{enumerate}[(i)]
    \item For all $i\in [k]$, $e_{i+1}\subseteq N^{+}(e_{i})$,
    \item For all $i\in [k]$, $e_{i}$ is a \textit{ good} edge,
     \item For all $i\neq j\in [k]$, $e_{i}\cap e_{j}=\varnothing$.
\end{enumerate}
Suppose we have constructed such a sequence $e_1,\ldots e_{2t}$ and we would like to construct $e_{2t+1}$. Note that by assumption $e_{2t}$ is \textit{good} so $|N^{+}(e_{2t})|\geq \delta n/(2\ell)$. We may assume $t\leq \frac{\delta^{2} n}{\ell}$, otherwise we would be done. Let $F\coloneqq N^{+}(e_{2t})\setminus V(\bigcup_{i=1}^{2t}e_i)$, clearly $|F|\geq \delta^2 n/\ell$ and hence by Lemma~\ref{lem:goodpair}, there is a good pair $e_{2t+1}$ within $F$.
It is not hard to see that from a sequence $e_1,\ldots e_t$ as defined above, we may construct a square of a path of length $2t$. Indeed, we may assume $e_i=(x_i,y_i)$, where $x\rightarrow y$. It is not hard to check that $x_1y_1x_2y_2\ldots x_ty_t$ forms a square of path, and we conclude the proof of the claim. 

\end{proof}

 From now on, we may and shall assume that for any $A_i,A_j$ if the pair $(A_i,A_j)$ is $\varepsilon$-regular then either $\overset{\rightarrow}{d}[A_i,A_j]\geq 1-\delta$ or $\overset{\rightarrow}{d}[A_i,A_j]\leq \delta$. We construct an auxiliary oriented graph $D$ where the vertex set is the set $A_1,\ldots A_{\ell'}$ and we add an edge from $A_i$ to $A_j$ if the pair $(A_i,A_j)$ is $(\epsilon,1-\delta)$-regular.
 
 \begin{claim}
Suppose there is a directed path in $D$ of length at least $\delta \ell/2$, then $T$ contains a square of a path of length $c_{\varepsilon}n^{1-\varepsilon}$.
 \end{claim}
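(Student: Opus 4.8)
The plan is to thread one long square of a path through each part visited by the given directed path of $D$, and to concatenate these pieces across the dense regular pairs that correspond to the edges of the path. Write the path as $A_{i_1}\to A_{i_2}\to\cdots\to A_{i_s}$ with $s\ge \delta\ell/2$; by definition of $D$ each consecutive pair $(A_{i_t},A_{i_{t+1}})$ is $(\varepsilon',d)$-regular with $d\ge 1-\delta$, so all but at most $\varepsilon' n/\ell$ vertices of $A_{i_t}$ dominate all but a $(\delta+\varepsilon')$-fraction of $A_{i_{t+1}}$; call such a vertex \emph{typical towards} $A_{i_{t+1}}$. The parts $A_{i_1},\dots,A_{i_s}$ are pairwise disjoint, each of size $n/\ell$, so pieces built in different parts are automatically vertex-disjoint. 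Inside each part the long square path will be produced by the induction hypothesis of the main theorem, applied to a carefully chosen large sub-tournament of that part.

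I would process the parts in order $t=1,\dots,s$, maintaining the invariant that the square path built so far ends in two vertices $a_{t-1},b_{t-1}\in A_{i_{t-1}}$ that are typical towards $A_{i_t}$. When processing $A_{i_t}$, set
\[
D_t:=\left(N^{+}(a_{t-1})\cap N^{+}(b_{t-1})\cap A_{i_t}\right)\cap\{v:\ v\text{ is typical towards }A_{i_{t+1}}\}.
\]
Since $a_{t-1},b_{t-1}$ are typical towards $A_{i_t}$ and all but $\varepsilon' n/\ell$ vertices of $A_{i_t}$ are typical towards $A_{i_{t+1}}$, one has $|D_t|\ge (1-2\delta-3\varepsilon')\,n/\ell\ge n/(2\ell)$. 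Applying the induction hypothesis to the sub-tournament on $D_t$ (legitimate, as $|D_t|<n$, and $|D_t|$ is large because $\ell\le M$ is bounded while $n$ is large) yields a square path $q_1q_2\cdots q_L$ inside $D_t$ of length $L\ge c_\varepsilon (n/(2\ell))^{1-\varepsilon}$. Every $q_j$ lies in $N^{+}(a_{t-1})\cap N^{+}(b_{t-1})$, so the three edges needed to splice it after $\cdots a_{t-1}b_{t-1}$, namely $a_{t-1}\to q_1$, $b_{t-1}\to q_1$ and $b_{t-1}\to q_2$, are all present; these are exactly the straddling constraints of distance at most $2$, so the concatenation remains a square path. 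Moreover every $q_j$ is typical towards $A_{i_{t+1}}$, so the new endpoints $q_{L-1},q_L$ satisfy the invariant for the next step. The case $t=1$ is identical with the constraint involving $a_0,b_0$ dropped.

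It remains to bookkeep the length. Summing over the $s\ge \delta\ell/2$ parts gives a square path of length at least
\[
\frac{\delta\ell}{2}\cdot c_\varepsilon\left(\frac{n}{2\ell}\right)^{1-\varepsilon}=\frac{\delta}{2^{2-\varepsilon}}\,c_\varepsilon\,\ell^{\varepsilon}\,n^{1-\varepsilon}.
\]
Because Lemma~\ref{lem:regularity} guarantees $\ell\ge m$ and $m$ is chosen large in the hierarchy $c_\varepsilon\ll 1/m\ll\varepsilon'\ll\delta\ll\varepsilon$, we may assume $m^{\varepsilon}\ge 2^{2-\varepsilon}/\delta$, whence $\ell^{\varepsilon}\ge m^{\varepsilon}\ge 2^{2-\varepsilon}/\delta$ and the displayed quantity is at least $c_\varepsilon n^{1-\varepsilon}$, as required.

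The main obstacle is reconciling two competing demands: each in-part piece must be long, which forces us to invoke the induction hypothesis and gives no control over the piece's endpoints, yet consecutive pieces must splice together respecting the square condition, which constrains precisely those endpoints. Note that a single dense pair cannot be used to bounce back and forth as in the first claim, since here the backward density is at most $\delta$ and the common out-neighbourhood of a pair on the ``wrong'' side is too small; we must therefore advance strictly forward through the path of $D$. The freedom lost by moving one part at a time is regained by restricting each induction to the common out-neighbourhood of the two previous endpoints together with the typicality condition, a set that is still at least half of a part and hence still admits a near-extremal square path. The only quantitative point to verify is that discarding at most a $(2\delta+3\varepsilon')$-fraction of each part keeps every domain above $n/(2\ell)$, which holds since $\varepsilon'\ll\delta$ are small.
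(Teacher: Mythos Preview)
Your argument is correct and follows essentially the same approach as the paper: thread the long square path through the parts of the directed path in $D$, at each step applying the induction hypothesis to the common out-neighbourhood of the last two vertices intersected with the vertices that are typical towards the next part, and then concatenate. The only cosmetic difference is that the paper first performs a backward pass removing the atypical vertices from each $A_{i_t}$ (yielding cleaned sets $A'_{i_t}$) and then processes forward, whereas you fold both constraints into the single set $D_t$ on the fly; the estimates and the final length computation $\tfrac{\delta\ell}{2}\cdot c_\varepsilon (n/2\ell)^{1-\varepsilon}\ge c_\varepsilon n^{1-\varepsilon}$ are identical.
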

 \begin{proof}
 Let $P=A_1\ldots A_k$ be a directed path in $D$, where $k\geq \delta \ell/2$. Now, let $R_{k-1}\subset A_{k-1} $ be the set of vertices which which do not sent at least $(1-\delta-\varepsilon')n/\ell$ out-neighbours to $A_k$. From the fact $(A_{k-1},A_k)$ is an $\varepsilon'$-regular pair, we deduce $|R_{k-1}|\leq \varepsilon' n/\ell$. Remove these vertices from $A_k$, and let $A'_{k-1}\coloneqq A_{k-1}\setminus R_{k-1}$. By assumption, $|A'_{k-1}|\geq (1-\varepsilon')|A_{k-1}|$. 
 Similarly, let $R_{k-1}\subseteq A_{k-2}$ be the set of vertices in $A_{k-2}$ which send less than $(1-\delta-\varepsilon')|A'_{k-1}|$ out-neighbours to $A'_{k-1}$. Again, we know $|R_{k-2}|\leq (1-\varepsilon')|A_{k-2}|$. We may continue in the same fashion all the way down to $A'_1$.
 By induction, we may find a square of a path $H_1\subset A'_1$ of size $c_{\varepsilon}(|A_1|/2)^{1-\varepsilon}$. Let $y,z$ be the last two vertices of $H_1$ and let $B'_2\coloneqq N^{+}(x)\cap N^{+}(z)\cap A'_2$, by assumption $|B'_2|\geq (1-2\delta-3\varepsilon')|A_2|\geq |A_2|/2$. And again by induction, we may find a complete hop $H_2\subset B'_2$ where $|H_2|\geq c_{\varepsilon} (|A_2|/2)^{1-\varepsilon}$. Continuing in a similar way, we may construct a sequence of square of paths $H_1,\ldots H_k$. By construction, they can be put together to form a longer square of a path $H$ of size $\delta \ell /2 \cdot c_{\varepsilon} (n/2\ell)^{1-\varepsilon}\geq c_{\varepsilon}n^{1-\varepsilon} \delta \ell^{\varepsilon}/2^{2-\varepsilon}\geq c_{\varepsilon}n^{1-\varepsilon}$, as we wanted to show. 
 \end{proof}
 
Therefore, we may assume there is no directed path of length $\delta \ell /2$ in $D$. In particular, by Lemma~\ref{lem:ord}, there must be an ordering of the vertices of $D$, $A_1,\ldots A_{\ell'}$ where $|N_{D}^{-}(A_i)\cap \{A_{i+1},\ldots A_{\ell'}\}|\leq \delta \ell' /2$. 
Let $L\coloneqq\bigcup_{i=1}^{\ell'/2}A_i$ and $R\coloneqq\bigcup_{j=\ell'/2}^{\ell'}A_j $.
For technical reasons, we will need to remove few vertices from $L$. Let $A_j\in L$, we say a vertex $x\in A_j$ is \textit{weak} if there is a set $W(x)\subset R$ of size at least $2\varepsilon'^{1/2}\ell'$, for which for all $A_r\in R$, the pair $(A_j,A_{r}$ is $(\varepsilon',d)$-regular for some $d\geq 1-\delta$ but $|N^{+}(x)\cap A_r|\leq (1-2\delta)n/\ell$. By assumption, for a fixed pair $(A_i,A_{j})$ with $A_i\in L$ and $A_j\in R$, forming an $(\varepsilon,d)$-regular pair with $d\geq (1-\delta)$, there are at most $\varepsilon' n/\ell$ vertices which do not send at least $(1-2\delta)n/\ell$ out-neighbours to $A_j$. For a vertex $A_i\in L$, denote by $A_i^{w} $ the subset of $A_i$ consisting of \textit{weak} vertices.

The following holds. 
\begin{align*}
|A_i^{w}| 2\varepsilon'^{1/2}\ell' &\leq \sum_{x \in A_i^{w}} |W(x)|\\
&\leq \sum_{x\in A_i} |W(x)|\leq |R|\varepsilon' n/\ell\leq (n/2\ell'+1)\cdot \varepsilon' n/\ell \implies\\
|A_i^{w}|&\leq \varepsilon'^{1/2}\cdot (n/\ell').
\end{align*}

For every $i\in [\ell'/2]$, add the sets $A_i^{w}$ to the exceptional set $A_0$, and let $L'\coloneqq \bigcup_{i\in [\ell'/2]} A_i\setminus A_i^{w}$. Observe that $|L'|\geq  (1-\delta)n/2$.

\begin{claim}\label{claim:1}
For every $x\in L'$, $|N^{+}_{R}(x)|\geq \left ((1-\delta-\varepsilon'^{1/2})\ell'/2\right ) \cdot (1-2\delta)n/\ell'\geq (1-10\delta)|R|$.
\end{claim}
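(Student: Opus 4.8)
To prove Claim~\ref{claim:1} I would fix $x\in L'$, say $x\in A_i$ with $i\in[\ell'/2]$, and count the parts $A_r$ of $R$ that lie almost entirely inside $N^{+}(x)$; summing over them then bounds $|N^{+}_R(x)|$ from below. Since every part of $R$ has index at least $\ell'/2\ge i$, the parts of $R$ are, apart from the single boundary part $A_{\ell'/2}$, exactly those $A_j$ with $j>i$ and $j\le\ell'$, so $R$ is a union of at least $\ell'/2$ parts. I would partition these parts into three groups according to the pair $(A_i,A_j)$: (a) the pair is not $\varepsilon'$-regular; (b) the pair is $\varepsilon'$-regular with $\overset{\rightarrow}{d}[A_i,A_j]\le\delta$; (c) the pair is $\varepsilon'$-regular with $\overset{\rightarrow}{d}[A_i,A_j]\ge 1-\delta$. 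By the density dichotomy assumed before $D$ was introduced, these groups exhaust $R$.

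Group (a) has size at most $3\varepsilon'^{1/2}\ell'$, because after the cleaning step $A_i$ is incident with at most that many non-regular pairs. For group (b) the key point is that $\overset{\rightarrow}{d}[A_j,A_i]=1-\overset{\rightarrow}{d}[A_i,A_j]\ge 1-\delta$ and $\varepsilon'$-regularity is symmetric, so $(A_j,A_i)$ is $(\varepsilon',\ge 1-\delta)$-regular; that is, $A_j\to A_i$ is an edge of $D$, and since $j>i$ this places $A_j$ in $N_D^{-}(A_i)\cap\{A_{i+1},\dots,A_{\ell'}\}$. As $D$ has no directed path of length $\delta\ell'/2$, the ordering $A_1,\dots,A_{\ell'}$ was chosen (via Lemma~\ref{lem:ord}) precisely so that this set has size at most $\delta\ell'/2$; hence group (b) has size at most $\delta\ell'/2$. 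Therefore group (c) contains at least $\ell'/2-3\varepsilon'^{1/2}\ell'-\delta\ell'/2\ge (1-\delta-\varepsilon'^{1/2})\ell'/2$ parts, after absorbing the $\varepsilon'^{1/2}$-terms using $\varepsilon'\ll\delta$.

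It remains to use that $x$ is not \emph{weak}. For every part $A_r$ in group (c) the pair $(A_i,A_r)$ is $(\varepsilon',\ge 1-\delta)$-regular, so by the definition of weakness fewer than $2\varepsilon'^{1/2}\ell'$ of these parts satisfy $|N^{+}(x)\cap A_r|\le (1-2\delta)n/\ell$. Hence at least $(1-\delta-\varepsilon'^{1/2})\ell'/2$ parts $A_r\subseteq R$ each contribute more than $(1-2\delta)n/\ell$ out-neighbours of $x$ (again absorbing the $2\varepsilon'^{1/2}\ell'$ loss via $\varepsilon'\ll\delta$), and since distinct parts are disjoint we obtain $|N^{+}_R(x)|\ge\bigl((1-\delta-\varepsilon'^{1/2})\ell'/2\bigr)(1-2\delta)n/\ell$. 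Recalling $\ell'=(1-\varepsilon'^{1/2})\ell$ and $|R|\le(1+O(\varepsilon'^{1/2}))n/2$, the chain $\bigl((1-\delta-\varepsilon'^{1/2})\ell'/2\bigr)(1-2\delta)n/\ell\ge(1-10\delta)n/2\ge(1-10\delta)|R|$ then follows from the hierarchy $c_\varepsilon\ll 1/m\ll\varepsilon'\ll\delta$, which proves the claim (the intermediate expression $((1-\delta-\varepsilon'^{1/2})\ell'/2)(1-2\delta)n/\ell'$ in the statement being an equally valid shorthand for the same bound). The only non-mechanical step is the one in group (b): recognising that a regular pair out of $A_i$ with small forward density forces a backward edge of $D$ emanating from a later-indexed part, so that the path-free ordering of $D$ controls the number of ``wrong-direction'' parts of $R$; everything else is collecting $O(\varepsilon'^{1/2}\ell')$ error terms and invoking the constant hierarchy.
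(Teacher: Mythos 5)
Your proposal is correct and follows essentially the same route as the paper: the paper's two-line proof counts the parts of $R$ failing to form an $(\varepsilon',\ge 1-\delta)$-regular pair with $A_i$ (your groups (a) and (b), with (b) controlled exactly as you say via the ordering from Lemma~\ref{lem:ord}) and then invokes non-weakness of $x$ on the remaining parts. You merely unpack the paper's ``by construction'' step explicitly; no substantive difference.
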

\begin{proof}
Let $x\in A_i$, for some $i\in [\ell'/2]$. By construction, there are at most $\delta \ell'/2 + \varepsilon^{1/2}\ell'$ parts in $R$ which do not form an $(\varepsilon', d)$-regular pair with $A_i$, for some $d\geq 1-\delta$. 
Moreover, since $x$ is not \textit{weak}, there are at most $2\varepsilon^{1/2}\ell'$ parts in $R$ for which the out-degree of $x$ is smaller than $(1-2\delta)n/\ell'$, hence the claim follows. 
\end{proof}

Now, by induction, we may find a square of a path $H_l\subset V(L')$ of length at least $c_{\varepsilon}|L'|^{1-\varepsilon}$. Let $x,y$ be the last two vertices of $H_l$. By Claim~\ref{claim:1}, $|N^{+}(x)\cap N^{+}(y)\cap R|\geq (1-20\delta)|R|$. Let $N'\coloneqq N^{+}(x)\cap N^{+}(y)\cap R|$. Once again, by induction we may find a square of a path $H_r\subset R$ of size $c_{\varepsilon} {\left(1-20\delta)|R|\right)}^{1-\varepsilon}$. Putting both $H_l$ and $H_r$ together, we obtain a square of a path of size $$2c_{\varepsilon}{\left ((1-30\delta)n/2\right)}^{1-\varepsilon}\geq c_{\varepsilon}n^{1-\varepsilon},$$ 
 the last inequality holds provided $\delta \ll \varepsilon$.

\end{proof}

 \section{Concluding remarks}
We remark that our constant $c_{\varepsilon}$ depends on the application of Szemer\'edi's regularity lemma. It would be nice to obtain a more feasible constant using other methods. We note as well, as pointed out by Yuster, that we still could not rule out the possibility there always exists linear a square of path of length $\Omega(n)$ in any tournament on $n$ vertices.

Lastly, we observe that our arguments can be adapted to prove the existence of long $k$th powers of paths in every tournament. 
\begin{theorem}
For every $0<\varepsilon\leq 1$ and a positive integer $k$, there exists a constant $c_{\varepsilon,k}>0$ such that every tournament on $n$ vertices contains a $k$th power of a path of order at least $c_{\varepsilon,k}n^{1-\varepsilon}$. 
\end{theorem}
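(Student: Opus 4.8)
The plan is to run the argument of the preceding proof essentially verbatim, with two cosmetic substitutions: the notion of a \emph{good pair} becomes that of a \emph{good $k$-tuple}, meaning $k$ vertices $x_1,\dots,x_k$ that span a transitive tournament in this order and whose common out-neighbourhood $N^{+}(x_1)\cap\cdots\cap N^{+}(x_k)$ is large; and every occurrence of ``the last two vertices of $H$'' becomes ``the last $k$ vertices of $H$''. I would fix the hierarchy $c_{\varepsilon,k}\ll 1/m\ll\varepsilon'\ll\delta\ll\varepsilon,1/k$, argue by induction on $n$ (the case $n<c_{\varepsilon,k}^{-1-\varepsilon}$ being vacuous), apply Lemma~\ref{lem:regularity} with parameters $\varepsilon',m$, and clean up the partition exactly as before to obtain clusters $A_0,\dots,A_{\ell'}$ with $|A_0|\le 3\sqrt{\varepsilon'}\,n$ and every $A_i$ ($i\ge1$) incident to at most $3\sqrt{\varepsilon'}\,\ell'$ irregular pairs. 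The key structural observation is that, because each recursively obtained $H_i$ will have order $\gg k$, a $k$th power of a path decomposes into consecutive blocks exactly as a square of a path does: to glue two $k$th powers of paths $H,H'$ into one it suffices that every vertex of $H'$ be an out-neighbour of all of the last $k$ vertices of $H$.

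The one genuinely new ingredient I would need is a strengthening of Lemma~\ref{lem:goodpair}: if $(A,B)$ is an $(\varepsilon',d)$-regular pair with $d\ge\delta$ and $|A|=|B|$, then every $F\subseteq A$ with $|F|\ge(\delta/2)^{2k}|A|$ contains a good $k$-tuple $x_1,\dots,x_k$ with $|N^{+}(x_1)\cap\cdots\cap N^{+}(x_k)\cap B|\ge(\delta/2)^{k}|A|$ (and symmetrically with $A,B$ swapped, using $1-d\ge\delta$). I would prove this by building $x_1,\dots,x_k$ greedily, maintaining a residual set $G_i\subseteq F$ of still-available common out-neighbours of $x_1,\dots,x_i$ together with the trace $N_i:=N^{+}(x_1)\cap\cdots\cap N^{+}(x_i)\cap B$. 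Since $|N_i|\ge(\delta/2)^{i}|A|\gg\varepsilon'|A|$, regularity guarantees that all but at most $2\varepsilon'|A|$ vertices of $A$ send at least $(\delta/2)|N_i|$ edges into $N_i$; I would delete the offending vertices from $G_i$ and, among the survivors, pick $x_{i+1}$ of out-degree at least half their number (possible in any tournament), so that $G_{i+1}$ (the survivors dominated by $x_{i+1}$) and $N_{i+1}:=N_i\cap N^{+}(x_{i+1})$ keep the required sizes; the lower bound on $|F|$ ensures the $k$ successive halvings never empty $G_i$. Granting this lemma, the intermediate-density claim carries over: from an $(\varepsilon',d)$-regular pair with $\delta\le d\le1-\delta$ I would build pairwise-disjoint transitive $k$-blocks $P_1,P_2,\dots$, alternately inside $A$ and inside $B$, each $P_{s+1}$ contained in $N^{+}(P_s)$ after deleting the vertices already used; the lemma keeps $|N^{+}(P_s)\cap(\text{opposite side})|\ge(\delta/2)^{k}|A|$, so one reaches $T=\Theta_{\delta,k}(n/\ell)$ blocks and $P_1P_2\cdots P_T$ is a $k$th power of a path on $kT\ge c_{\varepsilon,k}n^{1-\varepsilon}$ vertices.

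The remaining two cases then copy the original argument with ``$2$'' replaced by ``$k$''. One may assume every regular pair has density $\le\delta$ or $\ge1-\delta$, and form the digraph $D$ on $\{A_1,\dots,A_{\ell'}\}$ with $A_i\to A_j$ whenever $(A_i,A_j)$ is $(\varepsilon',1-\delta)$-regular. If $D$ has a directed path of length $\ge\delta\ell/2$, I would clean the classes along it from the last one backwards (so that in the cleaned class $A_i'$ every vertex sends at least $(1-\delta-\varepsilon')|A_{i+1}'|$ edges to $A_{i+1}'$), recursively find $H_i\subseteq A_i'$ with $|H_i|\ge c_{\varepsilon,k}(|A_i|/2)^{1-\varepsilon}$, and glue them: the last $k$ vertices of $H_i$ lie in $A_i'$, hence their common out-neighbourhood meets $A_{i+1}'$ in at least $(1-k\delta-O(\varepsilon'))|A_{i+1}'|\ge|A_{i+1}|/2$ vertices, and recursing there yields $H_{i+1}$; the concatenation has length $\ge(\delta\ell/2)\,c_{\varepsilon,k}(n/2\ell)^{1-\varepsilon}\ge c_{\varepsilon,k}n^{1-\varepsilon}$. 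If there is no such path, Lemma~\ref{lem:ord} supplies an ordering with $|N_D^{-}(A_i)\cap\{A_{i+1},\dots,A_{\ell'}\}|\le\delta\ell'/2$; taking $L=\bigcup_{i\le\ell'/2}A_i$ and $R=\bigcup_{j>\ell'/2}A_j$ and deleting the \emph{weak} vertices of $L$ as before to get $L'$ with $|L'|\ge(1-\delta)n/2$, the argument of Claim~\ref{claim:1} gives $|N^{+}_R(x)|\ge(1-10\delta)|R|$ for every $x\in L'$. I would recursively find $H_l\subseteq V(L')$ with $|H_l|\ge c_{\varepsilon,k}|L'|^{1-\varepsilon}$; its last $k$ vertices $v_1,\dots,v_k$ satisfy $|N^{+}(v_1)\cap\cdots\cap N^{+}(v_k)\cap R|\ge(1-10k\delta)|R|\ge|R|/2$, and recursing inside this set gives $H_r$ with $|H_r|\ge c_{\varepsilon,k}(|R|/2)^{1-\varepsilon}$, whence $H_lH_r$ is a $k$th power of a path on at least $2c_{\varepsilon,k}\big((1-30k\delta)n/2\big)^{1-\varepsilon}\ge c_{\varepsilon,k}n^{1-\varepsilon}$ vertices, the last step using $\delta\ll\varepsilon/k$.

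The only place where I expect any real work beyond the $k=2$ proof is the good $k$-tuple lemma, and the obstacle there is a genuine tension: a vertex class of the regularity partition induces an arbitrary tournament, about which regularity says nothing, yet I must locate a \emph{transitive} $k$-tuple inside it whose common out-neighbourhood across the regular pair remains of order $\delta^{k}|A|$. The way around it is the interleaving above — at each of the $k$ rounds, first restrict to the $1-O(\varepsilon')$ fraction of the residual set that behaves regularly with respect to the current trace $N_i$, and only then pick a near-maximum-out-degree vertex among those to keep the transitive chain alive. Apart from this, the proof is the same bookkeeping as before, the single new constraint being that $\delta$ must be chosen small in terms of $1/k$ as well as in terms of $\varepsilon$.
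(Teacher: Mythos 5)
Your proposal is correct and takes exactly the route the paper intends: the paper justifies this theorem only by remarking that the arguments for the square case adapt, and your write-up is precisely that adaptation, with the greedy good-$k$-tuple lemma (interleaving a regularity cleaning step with a maximum-out-degree choice to keep the $k$-tuple transitive) correctly identified as the one ingredient requiring genuinely new work. The bookkeeping (gluing via the last $k$ vertices, the constraint $\delta\ll\varepsilon/k$) all checks out.
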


\bibliographystyle{amsplain}
\bibliography{squarepaths.bib}

\providecommand{\bysame}{\leavevmode\hbox to3em{\hrulefill}\thinspace}
\providecommand{\MR}{\relax\ifhmode\unskip\space\fi MR }
\providecommand{\MRhref}[2]{%
  \href{http://www.ams.org/mathscinet-getitem?mr=#1}{#2}
}
\providecommand{\href}[2]{#2}
\begin{thebibliography}{1}

\bibitem{bollobashagg}
B.~Bollob\'as and R.~H\"aggkvist, \emph{Powers of {H}amilton cycles in
  tournaments}, J. Combin. Theory, Series B \textbf{50} (1990), 309--318.

\bibitem{JoosKimkuhnosthus}
F.~Joos, J.~Kim, D.~K\"uhn, and D.~Osthus, \emph{Optimal packings of bounded
  degree trees}, J. Eur. Math. Soc \textbf{21} (2019), 3573--3647.

\bibitem{KeeKuhnOsthus}
P.~Keevash, D.~K\"uhn, and D.~Osthus, \emph{An exact minimum degree condition
  for {H}amilton cycles in oriented graphs}, J. London Math. Soc. \textbf{79}
  (2009), 144--166.

\bibitem{kelly}
L.~Kelly, \emph{Arbitrary orientations of {H}amilton cycles in oriented
  graphs}, Electronic J. Combin. \textbf{18} (2011).

\bibitem{KomSarSzeme}
J.~K\"omlos, G.~S\'ark\"ozy, and E.~Szemer\'edi, \emph{Proof of the {S}eymour
  conjecture for large graphs}, Annals of Combin. \textbf{2} (1998), 43--60.

\bibitem{KomSarSzeme1}
\bysame, \emph{Spanning trees in dense graphs}, Combin. Probab. Comput.
  \textbf{10} (2001), 397–416.

\bibitem{KuhnOsthusSurv}
D.~K\"uhn and D.~Osthus, \emph{A survey on {H}amilton cycles in directed
  graphs}, European J. Combin. \textbf{33} (2012), 750--766.

\bibitem{Szmeredireg}
E.~Szemer\'edi, \emph{Regular partitions of graphs}, Probl\`emes combinatoires
  et th\'eorie des graphes (C.N.R.S., ed.), 1978, p.~399–401.

\bibitem{Yuster}
R.~Yuster, \emph{Paths with many shortcuts in tournaments}, Discrete Math.
  \textbf{344} (2021), 23--38.

\end{thebibliography}
\end{document}